\newcommand*{\mb}[1]{\mathbf{#1}}
\newcommand*{\supp}{\mathrm{supp}}
\newcommand*{\cJ}{\mathcal{J}}
\newcommand*{\cC}{\mathcal{C}}
\newcommand*{\cH}{\mathcal{H}}
\newcommand*{\cA}{\mathcal{A}}
\newcommand*{\cF}{\mathcal{F}}
\newcommand*{\frg}{{\mathfrak g}}
\newcommand*{\frh}{{\mathfrak h}}
\newcommand*{\fre}{{\mathfrak e}}
\newcommand*{\frf}{{\mathfrak f}}
\newcommand*{\frc}{{\mathfrak c}}
\newcommand*{\frd}{{\mathfrak d}}
\newcommand*{\frb}{{\mathfrak b}}
\newcommand*{\frs}{{\mathfrak s}}
\newcommand*{\frsl}{{\mathfrak{sl}}}
\newcommand*{\frso}{{\mathfrak{so}}}
\newcommand*{\frsp}{{\mathfrak{sp}}}
\newcommand*{\rank}{\mathrm{rank}} 
\newcommand*{\ad}{\mathrm{ad}}
\newcommand*{\NN}{\mathbb{N}}
\newcommand*{\ZZ}{\mathbb{Z}}
\newcommand*{\RR}{\mathbb{R}}
\newcommand*{\OO}{\mathbb{O}}
\newcommand*{\FF}{\mathbb{F}}
\newcommand*{\bydef}{:=}
\DeclareMathOperator{\Hom}{\mathrm{Hom}}
\DeclareMathOperator{\AAut}{\mathsf{Aut}}
\newtheorem{theorem}{Theorem}[section]
\newtheorem{proposition}[theorem]{Proposition}
\newtheorem{lemma}[theorem]{Lemma}
\newtheorem{corollary}[theorem]{Corollary}
\theoremstyle{definition}
\newtheorem{definition}[theorem]{Definition}
\newtheorem{example}[theorem]{Example}
\theoremstyle{remark}
\newtheorem{remark}[theorem]{Remark}
\def\hregla{\hrule height.1pt}
\def\hreglabis{\hrule height .3pt depth -.2pt}
\def\hregleta{\hrule height .5pt}
\def\hreglon{\hrule height1pt}
\def\vreglon{\vrule height 12pt width1pt depth 4pt}
\def\vregleta{\vrule width .5pt}
\def\hreglafill{\leaders\hreglabis\hfill}
\def\hregletafill{\leaders\hregleta\hfill}
\def\vregla{\vrule width.1pt}
\begin{document}

\title{Codes, $S$-structures, and exceptional Lie algebras}

\author[Isabel Cunha]{Isabel Cunha${}^\diamond$}
\address{Departamento de Matem\'atica e Centro de Matem\'atica e Aplica\c{c}\~{o}es da Universidade da Beira Interior, Universidade da Beira Interior, 6201-001 Covilh\~{a}, Portugal}
\email{icunha@ubi.pt}
\thanks{${}^\diamond$ 
Supported by Funda\c{c}\~{a}o para a Ci\^{e}ncia e Tecnologia, project PEst-OE/MAT/UI0212/2013}

\author[Alberto Elduque]{Alberto Elduque${}^\star{}^\dagger$}
\address{Departamento de Matem\'{a}ticas
 e Instituto Universitario de Matem\'aticas y Aplicaciones,
 Universidad de Zaragoza, 50009 Zaragoza, Spain}
\email{elduque@unizar.es}
\thanks{${}^\star$ Supported by grants MTM2017-83506-C2-1-P (AEI/FEDER, UE) and E22\_17R (Diputaci\'on General de Arag\'on)}
\thanks{${}^\dagger$ Corresponding author}

\subjclass[2010]{Primary 17B25, Secondary 17A30, 17B22, 94B05}

\keywords{Exceptional Lie algebra, Binary linear code, Root system, Coordinate Algebra, $S$-structure}


\begin{abstract}
The exceptional simple Lie algebras of types $E_7$ and $E_8$ are endowed with optimal $\mathsf{SL}_2^n$-structures, and are thus described in terms of the corresponding coordinate algebras. These are nonassociative algebras which much resemble the so called code algebras.
\end{abstract}

\maketitle

\section{Introduction}\label{se:intro}

There is a well-known connection between binary linear codes and root lattices. The reader may refer to \cite{CS} or \cite{Ebeling}. In particular, the $E_8$ root lattice is obtained from the extended Hamming $[8,4,4]$ binary linear code, and the $E_7$ root lattice from the simplex $[7,3,4]$ binary linear code, dual to Hamming's $[7,4,3]$ code.

Recently, a new class of commutative nonassociative (i.e., not necessarily associative) algebras have been defined in \cite{CMR}. They are called \emph{code algebras}. These algebras contain a family of orthogonal idempotents and a nice `Peirce decomposition' relative to this family. Code algebras are inspired by some axiomatic approaches to Vertex Operator Algebras.

On the other hand, Vinberg \cite{Vinberg} introduced lately the notion of $\mathsf{S}$-structure in a Lie algebra, as an extension of the notion of grading by an abelian group. Given an $\mathsf{S}$-structure in a Lie algebra, the isotypic decomposition relative to the action of the reductive group $\mathsf{S}$ provides a description of the Lie algebra in terms of a nonassociative system (algebra, pair, triple system, ...) that \emph{coordinatizes} the Lie algebra. Something similar happens for root graded Lie algebras, a subject initiated by Berman and Moody \cite{BermanMoody}. In the $BC_r$-case \cite{AllisonBenkartGao}, the isotypic decomposition becomes more involved, with the possibilities for the grading subalgebra to be of type $B$, $C$, or $D$, and there appear several different coordinate algebras.

Finally, the Lie algebras in Freudenthal's Magic Square, that includes the exceptional simple Lie algebras of types $F_4$, $E_6$, $E_7$, and $E_8$, were described in \cite{Eld07} in terms of very simple components, copies of the $3$-dimensional simple Lie algebra and of its $2$-dimensional simple representation. These descriptions were obtained by constructing the Lie algebras in Freudenthal's Magic Square by means of a couple of symmetric composition algebras and their triality Lie algebras, and by describing these simpler objects in the above terms. 

It turns out that a closer look at the results in \cite{Eld07} shows that these can be recast in terms of \emph{optimal short $\mathsf{SL}_2^n$-structures} in the corresponding Lie algebras. And this is the goal of this paper. 

For the exceptional simple Lie algebras of types $E_7$ and $E_8$, the coordinate algebras that appear are quite close to the code algebras in \cite{CMR}, although commutativity is not assumed in the former ones. Not surprisingly, the codes involved are the simplex and the extended Hamming binary linear codes mentioned above. Moreover, the structure constants can be described in terms of the real division algebra of the octonions. In this way, not only the root lattices of types $E_7$ and $E_8$ are described in terms of these codes, but their Lie brackets are determined by these codes and a sort of `code algebras' attached to them.

The paper is structured as follows. Section \ref{se:codes_lattices} reviews the connection between binary linear codes and root lattices, and presents the codes that will appear throughout the paper. Vinberg's $\mathsf{S}$-structures are recalled in Section \ref{se:Sstructures}, and the (optimal) short $\mathsf{SL}_2^n$-structures are introduced. A simple Lie algebra over an algebraically closed field of characteristic $0$ admits such an optimal structure if and only if it is of one of the following types (Corollary \ref{co:optimal}): $A_1$, $B_{2n}$ $(n\geq 2)$, $C_n$ ($n\geq 2$), $D_{2n}$ ($n\geq 2$), $E_7$, $E_8$, or $F_4$. The corresponding coordinate algebras are studied too. Sections \ref{se:e7} and \ref{se:e8} are devoted to recast the results in \cite{Eld07} about the exceptional simple Lie algebras of types $E_7$ and $E_8$ in terms of optimal $\mathsf{SL}_2^n$-structures and their coordinate algebras. These are described in terms of the real division algebra of the octonions, following the ideas in \cite{Eld07}. The coordinate algebras in these cases are quite close to code algebras. Section \ref{se:f4} gives similar results for the exceptional simple Lie algebra of type $F_4$, while Section \ref{se:classical} outlines the description of the optimal $\mathsf{SL}_2^n$-structure for the remaining classical cases in Corollary \ref{co:optimal}.

\bigskip

\section{Codes and root systems}\label{se:codes_lattices}

The goal of this section is to review the connections between binary linear codes and root lattices, and to provide suitable examples.

Denote by $\mb{u}\bullet\mb{v}$ the standard dot product in $\RR^n$: $(u_1,\ldots,u_n)\bullet(v_1,\ldots,v_n)=u_1v_1+\cdots + u_nv_n$.

Let $\Gamma\subset \RR^n$ be an even lattice, i.e., a lattice such that $\mb{x}^{\bullet 2}\in 2\ZZ$ for all $\mb{x}\in\Gamma$. The \emph{roots} of $\Gamma$ are the elements $\mb{x}\in\Gamma$ such that $\mb{x}^{\bullet 2}=2$. The even lattice $\Gamma$ is said to be a \emph{root lattice} if its set of roots spans $\Gamma$.

Every root lattice $\Gamma$ is the orthogonal direct sum of the irreducible root lattices corresponding to the simply laced Dynkin diagrams $A_n$ ($n\geq 1$), $D_n$ ($n\geq 4$), $E_6$, $E_7$, $E_8$. (See \cite[Theorem 1.2]{Ebeling}.)  Its Weyl group $W(\Gamma)$ is the group generated by the reflections at the hyperplanes orthogonal to all roots.

Consider a binary linear code $\mb{C}\subseteq \FF_2^n$, that is, a vector subspace of $\FF_2^n$, where $\FF_2$ denotes the field of two elements,  and consider the reduction modulo $2$ map
\[
\rho:\ZZ^n\longrightarrow \left(\ZZ/2\right)^n=\FF_2^n\,.
\]
This is a group homomorphism and $\Gamma_{\mb{C}}\bydef\frac{1}{\sqrt{2}}\rho^{-1}(\mb{C})$ is a lattice in $\RR^n$.

\begin{proposition}[{see \cite[Proposition 1.5]{Ebeling}}]\label{pr:code_lattices}
Let $\Gamma\subset\RR^n$ be an irreducible root lattice. Then the following statements are equivalent:
\begin{enumerate}
\item[(i)] $\Gamma=\Gamma_{\mb{C}}$ for a binary linear code $\mb{C}\subseteq \FF_2^n$.
\item[(ii)] $\Gamma$ contains $n$ pairwise orthogonal roots.
\item[(iii)] $nA_1=A_1\oplus\cdots\oplus A_1$ is a sublattice of $\Gamma$.
\item[(iv)] $-1\in W(\Gamma)$.
\item[(v)] $2\Gamma^*\subseteq \Gamma$, where $\Gamma^*$ is the \emph{dual lattice} $\Gamma^*\bydef\{\mb{x}\in\RR^n\mid \mb{x}\bullet\mb{y}\in\ZZ\ \forall \mb{y}\in\Gamma\}$.
\item[(vi)] $\Gamma$ is of type $A_1$, $D_{2n}$ ($n\geq 2$), $E_7$ or $E_8$.
\end{enumerate}
\end{proposition}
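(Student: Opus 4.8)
The plan is to treat \emph{(i)}, \emph{(ii)}, \emph{(iii)} as a single block established by elementary code/lattice manipulations, and then to close the cycle
\[
\text{\emph{(ii)}}\Rightarrow\text{\emph{(iv)}}\Rightarrow\text{\emph{(v)}}\Rightarrow\text{\emph{(vi)}}\Rightarrow\text{\emph{(ii)}},
\]
using at the very end the classification of irreducible root lattices recalled above. Throughout I would use that an even lattice is integral (expanding $(\mb{x}+\mb{y})^{\bullet 2}\in 2\ZZ$ forces $\mb{x}\bullet\mb{y}\in\ZZ$), so that $\Gamma\subseteq\Gamma^*$, and that $\Gamma$ has full rank $n$, so that ``$n$ pairwise orthogonal roots'' means an orthogonal frame spanning $\RR^n$.

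For the block, \emph{(i)}$\Rightarrow$\emph{(ii)} is immediate: in $\Gamma_{\mb{C}}$ the vectors $\sqrt{2}\,\mb{e}_i=\tfrac{1}{\sqrt 2}(2\mb{e}_i)$ lie in $\Gamma_{\mb{C}}$ since $\rho(2\mb{e}_i)=0\in\mb{C}$, and they are $n$ pairwise orthogonal roots. The equivalence \emph{(ii)}$\Leftrightarrow$\emph{(iii)} is formal, the $n$ orthogonal roots being exactly an orthogonal basis of copies of $A_1$. For the converse \emph{(ii)}$\Rightarrow$\emph{(i)} I would rescale coordinates so the given orthogonal roots become $\sqrt 2\,\mb{e}_1,\dots,\sqrt 2\,\mb{e}_n$, whence $\sqrt 2\,\ZZ^n\subseteq\Gamma$; integrality then gives $\sqrt 2\,x_i=\mb{x}\bullet(\sqrt 2\,\mb{e}_i)\in\ZZ$, so $\sqrt 2\,\Gamma\subseteq\ZZ^n$. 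Setting $\mb{C}\bydef\rho(\sqrt 2\,\Gamma)$, a subgroup hence a linear code, one has $\rho^{-1}(\mb{C})=\sqrt 2\,\Gamma+2\ZZ^n=\sqrt 2\,\Gamma$ (because $2\ZZ^n=\sqrt 2(\sqrt 2\,\ZZ^n)\subseteq\sqrt 2\,\Gamma$), so $\Gamma=\tfrac1{\sqrt 2}\rho^{-1}(\mb{C})=\Gamma_{\mb{C}}$.

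Next, \emph{(ii)}$\Rightarrow$\emph{(iv)}: the reflections $s_{\mb{r}_1},\dots,s_{\mb{r}_n}$ in the orthogonal roots commute, each $s_{\mb{r}_i}$ negates $\mb{r}_i$ and fixes the others, so their product negates a basis and therefore equals $-1$, which thus lies in $W(\Gamma)$. For \emph{(iv)}$\Rightarrow$\emph{(v)} I would observe that for a root $\mb{r}$ and any $\lambda\in\Gamma^*$ one has $s_{\mb{r}}(\lambda)=\lambda-(\lambda\bullet\mb{r})\,\mb{r}\equiv\lambda\pmod{\Gamma}$, since $\lambda\bullet\mb{r}\in\ZZ$; as $W(\Gamma)$ is generated by such reflections, every $w\in W(\Gamma)$ satisfies $w\lambda\equiv\lambda\pmod{\Gamma}$, and taking $w=-1$ yields $2\lambda\in\Gamma$, i.e.\ $2\Gamma^*\subseteq\Gamma$.

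Finally, \emph{(v)}$\Rightarrow$\emph{(vi)} reads condition \emph{(v)} as saying that the discriminant group $\Gamma^*/\Gamma$ is killed by $2$; running through the classification and the known discriminant groups ($\ZZ/(n+1)$ for $A_n$; $\ZZ/4$ or $(\ZZ/2)^2$ for $D_n$ according to the parity of $n$; and $\ZZ/3,\ \ZZ/2,\ 0$ for $E_6,E_7,E_8$) shows that the $2$-torsion condition singles out precisely $A_1$, $D_{2n}$ $(n\geq2)$, $E_7$, $E_8$. To close the cycle, \emph{(vi)}$\Rightarrow$\emph{(ii)}, I would exhibit $n$ orthogonal roots in each type: trivial for $A_1$; the roots $\mb{e}_{2i-1}\pm\mb{e}_{2i}$ for $D_{2n}$ (this is exactly where evenness of the rank is used); the $D_8\subset E_8$ frame for $E_8$; and the known $7A_1$ subsystem for $E_7$ (the one underlying the simplex code). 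I expect the genuine content to be concentrated in \emph{(v)}$\Rightarrow$\emph{(vi)}, whose case analysis rests on the discriminant-group computations for the $D$ and exceptional lattices, together with the explicit production of the $7A_1$ frame in $E_7$; the remaining implications are short and formal.
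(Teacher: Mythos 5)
Your proposal is correct, but note that the paper itself does not prove this proposition at all: it is quoted directly from Ebeling's book (Proposition 1.5 of \emph{Lattices and codes}), and the paper relies on that citation. What you have written is essentially a self-contained reconstruction of the standard argument behind the cited result. Your cycle of implications is sound: (i)$\Rightarrow$(ii) via the vectors $\sqrt{2}\,\mb{e}_i\in\Gamma_{\mb{C}}$; (ii)$\Rightarrow$(i) by moving the orthogonal frame onto $\sqrt{2}\,\mb{e}_1,\ldots,\sqrt{2}\,\mb{e}_n$ (strictly speaking this is an isometry, not a ``rescaling'', and the identity $\Gamma=\Gamma_{\mb{C}}$ is then an equality up to isometry, which is the intended reading), and the observation that $\ker\rho=2\ZZ^n\subseteq\sqrt{2}\,\Gamma$ is exactly the right point to make $\rho^{-1}(\rho(\sqrt{2}\,\Gamma))=\sqrt{2}\,\Gamma$; (ii)$\Rightarrow$(iv) as a product of commuting reflections; (iv)$\Rightarrow$(v) because $W(\Gamma)$ acts trivially on $\Gamma^*/\Gamma$; (v)$\Rightarrow$(vi) by the discriminant groups; (vi)$\Rightarrow$(ii) by exhibiting orthogonal frames. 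Two small caveats, neither a gap: the discriminant-group list and the existence of the $7A_1$ subsystem in $E_7$ are themselves quoted facts (acceptable, as they are standard, though the $E_7$ frame can also be produced directly from the simplex code via your own (i)$\Rightarrow$(ii), once one knows $\Gamma_{\mb{C}}\cong E_7$ for that code); and in (iv)$\Rightarrow$(v) your induction tacitly uses that $W(\Gamma)$ preserves both $\Gamma$ and $\Gamma^*$, which is worth one explicit sentence. Compared to the paper, your version buys self-containedness at the cost of importing the classification data; the paper simply outsources everything to the reference.
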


The binary linear codes in the following examples provide the key codes and root lattices for the remaining of the paper.

\begin{example}\label{ex:Hamming}
The Hamming $[7,4,3]$ binary linear code is defined on $\FF_2^7$ by the parity check relations:
\[
\begin{split}
c_1+c_3+c_5+c_7&=0\\
c_2+c_3+c_6+c_7&=0\\
c_4+c_5+c_6+c_7&=0
\end{split}
\]
We may think of $c_3,c_5,c_6,c_7$ as the data bits, and $c_1,c_2,c_4$ as the check bits. Note that, written in base $2$, the indices $1,3,5,7$ in the first equation are the natural numbers $\leq 7$ with the last binary digit equal to $1$, the indices $2,3,6,7$ in the second equation are those with the second last digit $1$, and the indices $4,5,6,7$ in the third equation are those with the first digit $1$.

Hence, a \emph{check matrix} for the Hamming code is
\begin{equation}\label{eq:check_Hamming}
\begin{pmatrix}
1&0&1&0&1&0&1\\ 0&1&1&0&0&1&1\\ 0&0&0&1&1&1&1
\end{pmatrix}
\end{equation}
A \emph{generator matrix} is given by (using $c_3,c_5,c_6,c_7$ as the data bits):
\begin{equation}\label{eq:generator_Hamming}
\begin{pmatrix}
1&1&1&0&0&0&0\\
1&0&0&1&1&0&0\\
0&1&0&1&0&1&0\\
1&1&0&1&0&0&1
\end{pmatrix}
\end{equation}
\end{example}

\begin{example}\label{ex:simplex}
The \emph{simplex} $[7,3,4]$ binary linear code $\mb{C}$ is the dual of the previous Hamming code, a generator matrix is then being given by \eqref{eq:check_Hamming}. For later use, let us permute columns $i\leftrightarrow 8-i$ in \eqref{eq:check_Hamming}, so here the simplex code will have generator matrix:
\begin{equation}\label{eq:simplex}
\begin{pmatrix}
1&0&1&0&1&0&1\\
1&1&0&0&1&1&0\\
1&1&1&1&0&0&0
\end{pmatrix}
\end{equation}
The corresponding root lattice is $E_7$ \cite[p.~26]{Ebeling}.
\end{example}

\begin{example}\label{ex:extended_Hamming}
We add one extra dimension to the Hamming code and the extra global parity check 
\[
c_0+c_1+c_2+c_3+c_4+c_5+c_6+c_7=0
\]
to get the \emph{extended Hamming $[8,4,4]$ binary linear code} with generator matrix:
\begin{equation}\label{eq:extended_Hamming}
\begin{pmatrix}
1&1&1&1&0&0&0&0\\
1&1&0&0&1&1&0&0\\
1&0&1&0&1&0&1&0\\
0&1&1&0&1&0&0&1
\end{pmatrix}
\end{equation}
The corresponding root lattice is $E_8$ \cite[\S 1.3]{Ebeling}
\end{example}

A particular class of algebras associated to binary linear codes has been defined in \cite[Definition 1]{CMR}:

\begin{definition}\label{df:code_algebra}
Let $\mb{C}\subseteq \FF_2^n$ be a binary linear code. A \emph{code algebra} based on $\mb{C}$ is a commutative algebra over a field $\FF$, endowed with a basis 
\[
\{t_i\mid i=1,\ldots,n\}\cup\left\{e^{\mb{c}}\mid \mb{c}\in\mb{C}\setminus\{\mb{0},\mb{1}\}\right\}
\]
(here $\mb{0}=(0,0,\ldots,0)$ and $\mb{1}=(1,1,\ldots,1)$), that satisfies the following relations:
\[
\begin{split}
&t_it_j=\begin{cases} t_i&\text{if $i=j$,}\\ 0&\text{otherwise,}\end{cases}\\
&t_ie^{\mb{c}}\in \FF e^{\mb{c}},\\
&e^{\mb{c}}e^{\mb{d}}\in\FF e^{\mb{c}+\mb{d}},\ \text{for $\mb{c}\neq \mb{d},\mb{1}-\mb{d}$,}\\
&\left(e^{\mb{c}}\right)^2\in\sum_{i\in\supp(\mb{c})}\FF t_i,\\
&e^{\mb{c}}e^{\mb{1}-\mb{c}}=0,
\end{split}
\]
for $1\leq i,j\leq n$ and $\mb{c},\mb{d}\in\mb{C}\setminus\{\mb{0},\mb{1}\}$, where for $\mb{c}=(c_1,\ldots,c_n)$, its \emph{support} $\supp(\mb{c})$ denotes the set of indices with $c_i=1$. (Thus, for instance, $\supp\bigl((1,0,1,0)\bigr)=\{1,3\}$.)
\end{definition}

The \emph{coordinate algebras} obtained in Sections \ref{se:e7} and \ref{se:e8}, relative to the simple Lie algebras of types $E_7$ and $E_8$, satisfy all the restrictions to be a code algebra, except for their lack of commutativity. The one obtained in Section \ref{se:f4}, relative to $F_4$, is a bit different, as an extra basic element $e^{\mb{1}}$ is included.

\bigskip

\section{$\mathsf{S}$-structures}\label{se:Sstructures}

Assume, throughout the paper, that our ground field $\FF$ is algebraically closed of characteristic $0$. Unadorned tensor products will be understood to be defined over $\FF$. All algebras will be assumed to be finite dimensional.

Given a finitely generated abelian group, a $G$-grading on a nonassociative algebra $\cA$ corresponds to a morphism of affine group schemes $G^D\longrightarrow \AAut(\cA)$, where $G^D$ is the Cartier dual of $G$, represented by the group algebra $\FF G$, which is a quasitorus (see \cite[Chapter 1]{EKmon}). In this situation, the homogeneous components are the isotypic components of the action of $G^D$. Vinberg \cite{Vinberg} introduced \emph{non-abelian gradings} as isotypic decompositions with respect to reductive groups of automorphisms. More specifically, given a reductive algebraic group $\mathsf{S}$, extending Vinberg's definition to arbitrary nonassociative algebras we get the following:

\begin{definition}[{see \cite[Definition 0.1]{Vinberg}}]\label{df:S_structure}
An $\mathsf{S}$-structure in a nonassociative algebra $\cA$ is a homomorphism $\Phi:\mathsf{S}\rightarrow \AAut(\cA)$.
\end{definition}

In this situation, the differential $\mathrm{d}\Phi$ gives a representation of the Lie algebra of $\mathsf{S}$ as derivations on $\cA$, $\mathrm{d}\Phi:\mathrm{Lie}(\mathsf{S})\rightarrow \mathfrak{der}(\cA)$.

In \cite{Vinberg} two different specific types of $\mathsf{S}$-structures are considered: very short $\mathsf{SL}_2$-structures and short $\mathsf{SL}_3$-structures in a Lie algebra $\frg$.

A nontrivial $\mathsf{SL}_2$-structure $\Phi$ in a Lie algebra $\frg$ is called \emph{very short} if the representation $\Phi$ decomposes into $1$- and $3$-dimensioonal irreducible representations. In a semisimple Lie algebra $\frg$, a very short $\mathsf{SL}_2$-structure gives rise to an isotypic decomposition of the form
\[
\frg=\bigl(\frsl_2\otimes \cJ\bigr)\oplus\mathfrak{der}(\cJ)
\]
for a semisimple Jordan algebra $\cJ$. (See \cite{Vinberg} and the references therein.)

Similarly, a nontrivial $\mathsf{SL}_3$-structure in a simple Lie algebra $\frg$ is called \emph{short} if the representation $\Phi$ decomposes into the adjoint representation of $\mathsf{SL}_3$ and $1$- and $3$-dimensional irreducible representations. (A more general situation is considered in \cite{BE}.) 
In this case, the Lie algebra $\frg$ can also be described in terms of a cubic Jordan algebra $\cJ$ \cite[Equation (31)]{Vinberg}:
\[
\frg=\frsl_3\oplus(V\otimes \cJ)\oplus(V^*\otimes \cJ)\oplus\mathfrak{str}_0(\cJ).
\]

\bigskip

\subsection{$\mathsf{SL}_2^n$-structures} \null\quad

Here we will consider the following $\mathsf{S}$-structures:

\begin{definition}\label{df:SL2n_structures}
Let $\frg$ be a simple Lie algebra, and let $n\in\NN$.
\begin{itemize}
\item An $\mathsf{SL}_2^n$-structure $\Phi:\mathsf{SL}_2^n\longrightarrow \AAut(\frg)$ is called \emph{short} if the representation $\Phi$ decomposes into the adjoint representation of $\mathsf{SL}_2^n$, irreducible representations formed by tensor products of the $2$-dimensional natural representations of some of the copies  of $\mathsf{SL}_2$ (without repetitions), and $1$-dimensional representations.

\item A short $\mathsf{SL}_2^n$-structure is said to be \emph{optimal} if $n=\rank(\frg)$.
\end{itemize}
\end{definition}

Given an $\mathsf{SL}_2^n$-structure in a Lie algebra $\frg$, let $V_i$ be the $2$-dimensional irreducible representation for the $i^{\text{th}}$ factor in $\mathsf{SL}_2^n$. Given any $\mb{c}\in\FF_2^n$, denote by $V^{\mb{c}}$ the $\mathsf{SL}_2^n$-module obtained as the tensor product of the $V_i$'s with $i\in\supp(\mb{c})$. Thus, for example, with $n=8$ and $\mb{c}=(1,0,0,1,0,1,1,0)$,
\[
V^{\mb{c}}=V_1\otimes V_4\otimes V_6\otimes V_7.
\]
In particular, $V^{\mb{0}}=\FF$ is the $1$-dimensional trivial representation.

\smallskip

Note that if $\Phi:\mathsf{SL}_2^n\longrightarrow\AAut(\frg)$ is a short $\mathsf{SL}_2^n$-structure in the simple Lie algebra $\frg$, the differential $\mathrm{d}\Phi$ gives a Lie algebra homomorphism $\mathrm{d}\Phi:\frsl_2^n\longrightarrow\frg\simeq\mathfrak{der}(\frg)$, and $\mathrm{d}\Phi$ is one-to-one, as the adjoint representation is a component of the representation $\Phi$.

Therefore, if $\Phi:\mathsf{SL}_2^n\longrightarrow \AAut(\frg)$ is a short $\mathsf{SL}_2^n$-structure in $\frg$, the isotypic decomposition of $\frg$ is of the form:
\begin{equation}\label{eq:isotypic}
\frg=\frsl_2^n\oplus\Bigl(\bigoplus_{\mb{c}\in\FF_2^n\setminus\{\mb{0}\}}(V^{\mb{c}}\otimes\cA^{\mb{c}})\Bigr)\oplus\frc
\end{equation}
where the subalgebra $\frsl_2^n$ (the image of $\mathrm{d}\Phi$) is the adjoint representation of $\mathsf{SL}_2^n$, the $\cA^{\mb{c}}$'s are vector spaces whose dimension indicates the multiplicity of $V^{\mb{c}}$, and $\frc$ is the sum of the $1$-dimensional representations, so that $\frc$ is the centralizer in $\frg$ of the subalgebra $\frsl_2^n$ and, as such, it is a subalgebra of $\frg$.

For each $i=1,\ldots,n$, let $\{e_i,f_i,h_i\}$ be a standard basis of the $i^{\text{th}}$ copy of $\frsl_2$: $[h_i,e_i]=2e_i$, $[h_i,f_i]=-2f_i$, $[e_i,f_i]=h_i$.

Hence, a short $\mathsf{SL}_2^n$-structure in the simple Lie algebra $\frg$ is given by a subalgebra of $\frg$ isomorphic to $\frsl_2^n$, such that the eigenvalues of the adjoint map $\ad h_i$ are $\pm 2$ with multiplicity $1$, and $\pm 1$ and $0$, because the eigenvalues of $\ad h_i$ on $V_i$ are $\pm 1$. 

The subspace $\FF h_1\oplus\cdots\oplus \FF h_n$ is a toral subalgebra of $\frg$, and hence contained in a Cartan subalgebra, which has the following form:
\begin{equation}\label{eq:h_cartan}
\frh=\FF h_1\oplus\cdots\oplus\FF h_n\oplus (\frh\cap \frc).
\end{equation}
Then the linear map $\alpha_i:\frh\rightarrow \FF$ given by
\begin{equation}\label{eq:root}
\alpha_i(h_i)=2,\quad \alpha_i(h_j)=0\ \text{if $i\neq j$},\quad \alpha_i(\frh\cap\frc)=0,
\end{equation}
is a root of $\frh$ with root space $\frg_{\alpha_i}=\FF e_i$.

\begin{theorem}\label{th:SL2n_structures}
Let $\Phi:\mathsf{SL}_2^n\longrightarrow\AAut(\frg)$ be a short $\mathsf{SL}_2^n$-structure in the simple Lie algebra $\frg$. Let $\{e_i,f_i,h_i\}$ be a standard basis of the image under $\mathrm{d}\Phi$ of the $i^{\text{th}}$ copy of $\frsl_2$. Let $\frh$ be the Cartan subalgebra in \eqref{eq:h_cartan}, and let $\alpha_1,\ldots,\alpha_n$ be the roots, relative to $\frh$, defined in \eqref{eq:root}. Then $\{\alpha_1,\ldots,\alpha_n\}$ is a set of pairwise orthogonal long roots.

Conversely, if $\frh$ is a Cartan subalgebra of the simple Lie algebra $\frg$ with associated root system $R$, and if $\{\alpha_1,\ldots,\alpha_n\}$ is a set of pairwise orthogonal roots in $R$, then 
\[
\frs_i=\frg_{\alpha_i}\oplus\frg_{-\alpha_i}\oplus [\frg_{\alpha_i},\frg_{-\alpha_i}]
\]
is a Lie subalgebra isomorphic to $\frsl_2$, $[\frs_i,\frs_j]=0$ for $i\neq j$ and the embedding
\[
\frsl_2^n\simeq \frs_1\oplus\cdots\oplus\frs_n\hookrightarrow \frg
\]
integrates to a short $\mathsf{SL}_2^n$-structure.
\end{theorem}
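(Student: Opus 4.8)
The plan is to prove the two implications separately, exploiting the standard structure theory of simple Lie algebras and their root systems.

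\medskip

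\emph{Forward direction.} Given a short $\mathsf{SL}_2^n$-structure, I first observe that the elements $h_1,\dots,h_n$ span a toral subalgebra, which by \eqref{eq:h_cartan} sits inside a Cartan subalgebra $\frh$, and the functionals $\alpha_i$ of \eqref{eq:root} are roots with $\frg_{\alpha_i}=\FF e_i$. The pairwise orthogonality is the heart of the matter. Since $[\frs_i,\frs_j]=0$ for $i\neq j$ (the copies of $\frsl_2$ commute by the definition of the $\mathsf{SL}_2^n$-action), the root vector $e_j$ lies in the centralizer of $\frs_i$; in particular $[h_i,e_j]=0$, so $\alpha_j(h_i)=0$. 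Translating this through the Killing-form identification of $\frh$ with $\frh^*$, I would show $(\alpha_i,\alpha_j)=0$ for $i\neq j$: because $\alpha_i$ vanishes on $\frh\cap\frc$ and the coroot $h_i=\alpha_i^\vee$ (up to the normalization $\alpha_i(h_i)=2$), orthogonality of the coroots forces orthogonality of the roots. To see that each $\alpha_i$ is a \emph{long} root, I use the defining short condition: the eigenvalues of $\ad h_i$ are $\pm 2,\pm 1,0$, with $\pm 2$ occurring only on $\FF e_i,\FF f_i$. Thus no root $\beta$ satisfies $\beta(h_i)=\alpha_i(h_i)\cdot k$ for $|k|>1$, which means $\alpha_i$ admits no root string of length forcing it to be short; equivalently, $\langle\beta,\alpha_i^\vee\rangle\in\{0,\pm1,\pm2\}$ for all roots $\beta$, and the value $\pm2$ is attained only by $\pm\alpha_i$ itself. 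In a simply-laced component every root is long, and in multiply-laced types this eigenvalue restriction is exactly what excludes $\alpha_i$ from being short, so $\alpha_i$ is long.

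\medskip

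\emph{Converse direction.} Starting from pairwise orthogonal roots $\alpha_1,\dots,\alpha_n$, the claim that each $\frs_i=\frg_{\alpha_i}\oplus\frg_{-\alpha_i}\oplus[\frg_{\alpha_i},\frg_{-\alpha_i}]$ is a copy of $\frsl_2$ is the standard $\frsl_2$-triple construction attached to a single root. The new content is $[\frs_i,\frs_j]=0$. For the Cartan parts this is immediate since $\frh$ is abelian. For brackets involving root spaces, I would argue by orthogonality: $[\frg_{\alpha_i},\frg_{\alpha_j}]\subseteq\frg_{\alpha_i+\alpha_j}$, and I must show $\alpha_i\pm\alpha_j$ is not a root. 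Because $(\alpha_i,\alpha_j)=0$, the $\alpha_i$-string through $\alpha_j$ is symmetric and has length determined by $\langle\alpha_j,\alpha_i^\vee\rangle=0$, which forces the string to be the single root $\alpha_j$; hence $\alpha_j\pm\alpha_i$ are not roots, giving $[\frs_i,\frs_j]=0$ on the root-space part as well as $[h_i,e_j]=0$ on the mixed Cartan/root part. This establishes the embedding $\frsl_2^n\simeq\frs_1\oplus\cdots\oplus\frs_n\hookrightarrow\frg$.

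\medskip

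\emph{Integration to a group homomorphism.} Finally I must promote the Lie-algebra embedding $\frsl_2^n\hookrightarrow\frg$ to an $\mathsf{SL}_2^n$-structure, i.e.\ a group homomorphism $\Phi:\mathsf{SL}_2^n\to\AAut(\frg)$. Since $\FF$ is algebraically closed of characteristic $0$ and $\frsl_2^n$ acts on $\frg$ by inner derivations (being a subalgebra of the simple, hence complete, Lie algebra $\frg\simeq\mathfrak{der}(\frg)$), the adjoint action exponentiates: each $\ad e_i$, $\ad f_i$ is nilpotent, so the simply-connected group $\mathsf{SL}_2^n$ maps into the inner automorphism group of $\frg$, and I verify it lands in $\AAut(\frg)$. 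That the resulting structure is \emph{short} follows because $\frg$, as an $\frsl_2^n$-module, decomposes into weight spaces on which each $\ad h_i$ has eigenvalues in $\{0,\pm1,\pm2\}$: the $\pm2$ eigenspace for each $i$ is exactly $\frs_i$ (the adjoint $\frsl_2$), the trivial part is the centralizer $\frc$, and the remaining constituents are tensor products of the natural $2$-dimensional modules $V_i$ with $i$ ranging over those indices for which the weight is $\pm1$. The main obstacle is the integration step: one must confirm that commuting nilpotent actions of the distinct $\frsl_2$ factors exponentiate to a genuine action of the \emph{direct product} group $\mathsf{SL}_2^n$ (rather than merely of each factor), which rests on $[\frs_i,\frs_j]=0$ ensuring the one-parameter subgroups commute, and on the semisimplicity of $\frg$ guaranteeing that no multiplicities higher than those allowed by shortness appear.
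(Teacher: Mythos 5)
Your forward direction is sound and follows essentially the same route as the paper: orthogonality comes from $\alpha_j(h_i)=0$, and longness from the fact that in a short structure the eigenvalues of $\ad h_i$ are $0,\pm1,\pm2$ with $\pm2$ occurring only on $\FF e_i\oplus\FF f_i$, combined with the characterization that a root $\alpha$ is long precisely when $\langle\beta,\alpha^\vee\rangle\in\{0,\pm1\}$ for every root $\beta\neq\pm\alpha$.

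The converse, however, contains a genuine gap. You claim that $(\alpha_i\mid\alpha_j)=0$ forces the $\alpha_i$-string through $\alpha_j$ to reduce to the single root $\alpha_j$, hence that $\alpha_j\pm\alpha_i$ are not roots. That inference is false: $\langle\alpha_j,\alpha_i^\vee\rangle=0$ only makes the string symmetric (i.e.\ $p=q$ in the string $\alpha_j-p\alpha_i,\ldots,\alpha_j+q\alpha_i$), not trivial. Concretely, in $\frg=\frso_5$ (type $B_2$) the two \emph{short} roots $\varepsilon_1,\varepsilon_2$ are orthogonal, yet $\varepsilon_1+\varepsilon_2$ is a root; the $\varepsilon_1$-string through $\varepsilon_2$ is $\varepsilon_2-\varepsilon_1,\ \varepsilon_2,\ \varepsilon_2+\varepsilon_1$, and $[\frg_{\varepsilon_1},\frg_{\varepsilon_2}]=\frg_{\varepsilon_1+\varepsilon_2}\neq 0$, so $[\frs_1,\frs_2]\neq 0$ and no short structure arises (the $\frs_1\oplus\frs_2$-module decomposition of $\frg$ then contains $3$-dimensional constituents other than the adjoint ones). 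What is needed --- and what the paper's own proof uses by taking pairwise orthogonal \emph{long} roots in the converse, a hypothesis that is in fact missing from the printed statement --- is longness: for orthogonal long roots, $\alpha_i\pm\alpha_j$ has squared length $2(\alpha_i\mid\alpha_i)$, strictly larger than that of any root, hence is not a root; equivalently, $(\alpha_j+\alpha_i)(h_i)=2$ would force $\frg_{\alpha_j+\alpha_i}$ into the eigenvalue-$2$ space of $\ad h_i$, which by longness is $\frg_{\alpha_i}$ alone. Your later weight analysis (all non-adjoint weights of the form $\pm\lambda_{i_1}\pm\cdots\pm\lambda_{i_r}$, needed for shortness after integrating) rests on the same missing hypothesis, so the proof must either add ``long'' to the assumptions or it collapses exactly at this point. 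The integration step itself (simple connectedness of $\mathsf{SL}_2^n$ in characteristic $0$) is fine and is treated equally briefly in the paper.
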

\begin{proof}
Note that given a root $\alpha$ relative to a Cartan subalgebra $\frh$ of the simple Lie algebra $\frg$, $\alpha$ is long if and only if $\langle \beta\mid\alpha\rangle =2\dfrac{(\beta\mid\alpha)}{(\alpha\mid\alpha)}\in \{0,1,-1\}$ for any root $\beta\neq\pm\alpha$. (Here, as usual, $(.\mid.)$ denotes the symmetric bilinear form on $\frh^*$ induced by the Killing form, or a scalar multiple of it.) With $h_\alpha\in [\frg_\alpha,\frg_{-\alpha}]$ such that $\alpha(h_\alpha)=2$, this means that $\beta(h_\alpha)\in \{0,1,-1\}$ for any root $\beta\neq \pm\alpha$. Thus the eigenvalues of $\ad h_\alpha$ are $\pm 2$ with multiplicity $1$, and $\pm 1$ and $0$.

Now, if $\Phi:\mathsf{SL}_2^n\longrightarrow \AAut(\frg)$ is a short $\mathsf{SL}_2^n$-structure in $\frg$, this shows that the $\alpha_i$'s defined in \eqref{eq:root} are orthogonal long roots.

Conversely, if $\{\alpha_1,\ldots,\alpha_n\}$ is a set of pairwise orthogonal long roots relative to a Cartan subalgebra $\frh$, consider the subalgebras $\frs_i=\frg_{\alpha_i}\oplus\frg_{-\alpha_i}\oplus [\frg_{\alpha_i},\frg_{-\alpha_i}]$, $i=1,\ldots,n$. Let $h_i\in [\frg_{\alpha_i},\frg_{-\alpha_i}]$ be the element with $\alpha_i(h_i)=2$. Then $\overline{\frh}=\FF h_1\oplus\cdots\oplus\FF h_n$ is a Cartan subalgebra of $\frs_1\oplus\cdots\oplus\frs_n\simeq \frsl_2^n$, and with $\lambda_i:\overline{\frh}\rightarrow \FF$ given by $\lambda_i(h_i)=1$, $\lambda_i(h_j)=0$ if $i\neq j$, the weights of the adjoint representation of $\frs_1\oplus\cdots\oplus\frs_n$ in $\frg$ are the $\pm 2\lambda_i$'s with multiplicity $1$, $0$, and weights of the form $\pm \lambda_{i_1}\pm\cdots\pm\lambda_{i_r}$, $1\leq i_1<\cdots <i_r\leq n$. Hence this adjoint representation decomposes into the adjoint module $\frs_1\oplus\cdots \oplus\frs_n$, irreducible representations formed by tensor products of the $2$-dimensional irreducible representations for each $\frs_i$, and $1$-dimensional representations. Therefore, this adjoint representation integrates to a short $\mathsf{SL}_2^n$-structure.
\end{proof}

\begin{corollary}\label{co:optimal}
Let $\frg$ be a simple Lie algebra. Then $\frg$ admits an optimal short $\mathsf{SL}_2^n$-structure if and only if $\frg$ is of type $A_1$, $B_{2n}$ $(n\geq 2)$, $C_n$ ($n\geq 2$), $D_{2n}$ ($n\geq 2$), $E_7$, $E_8$, or $F_4$. Any two optimal short $\mathsf{SL}_2^n$-structures of $\frg$ are conjugate by an automorphism.
\end{corollary}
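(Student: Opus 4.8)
The plan is to translate the entire statement into combinatorics of root systems by means of Theorem \ref{th:SL2n_structures}. According to that theorem, an optimal short $\mathsf{SL}_2^n$-structure (that is, one with $n=\rank\frg$) is the same datum as a set of $\rank\frg$ pairwise orthogonal long roots relative to some Cartan subalgebra $\frh$; being $\rank\frg$ in number and mutually orthogonal, such a set is automatically an orthogonal basis of $\frh^*$. Since all Cartan subalgebras are conjugate, I may fix $\frh$ together with its root system $R$, and the two assertions become: (existence) $R$ contains an orthogonal basis consisting of long roots exactly when $\frg$ is of one of the listed types; and (conjugacy) the Weyl group $W(R)$, realized by inner automorphisms, acts transitively on such bases.

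For existence the key observation is that the long roots of $R$ form a root system $R_\ell$ of the same rank which is simply laced, all its roots sharing the long length, and hence is a root lattice in the sense of Section \ref{se:codes_lattices}. A set of $\rank\frg=\rank R_\ell$ pairwise orthogonal roots of $R_\ell$ exists if and only if each irreducible component of $R_\ell$ contains as many pairwise orthogonal roots as its own rank (orthogonal roots from different components are automatically orthogonal, and the ranks add up), and by Proposition \ref{pr:code_lattices}, equivalence of (ii) and (vi), this happens precisely when every irreducible component of $R_\ell$ is of type $A_1$, $D_{2m}$, $E_7$ or $E_8$. It then remains to run through the simple types and record $R_\ell$: one has $R_\ell=R$ in the simply laced cases, $R_\ell=D_n$ for $B_n$, $R_\ell=nA_1$ for $C_n$, $R_\ell=D_4$ for $F_4$, and $R_\ell=A_2$ for $G_2$. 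Matching these against the admissible components $A_1,D_{2m},E_7,E_8$ yields exactly the list $A_1$, $B_{2n}$, $C_n$, $D_{2n}$, $E_7$, $E_8$, $F_4$, with $B_2=C_2$ absorbed into the $C$-series and $A_{\geq 2}$, the $D$ of odd index, $E_6$ and $G_2$ excluded.

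For the conjugacy statement I reduce, through the same dictionary, to transitivity of $W(R)$ on orthogonal bases of long roots, and then to each irreducible component of $R_\ell$ separately, since $W(R_\ell)$ is the product of the component Weyl groups and sits inside $W(R)$ as the subgroup generated by long-root reflections. A component of type $A_1$ contributes a unique root line, so there is nothing to prove there. For a component of type $D_{2m}$, the case relevant to $B_{2n}$, $D_{2n}$ and $F_4$, I describe an orthogonal basis of roots as a perfect matching of the $2m$ coordinates, each pair $\{i,j\}$ contributing the two roots $e_i\pm e_j$, and observe that the symmetric group $S_{2m}\subseteq W(D_{2m})$ acts transitively on perfect matchings. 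The remaining and most delicate components are $E_7$ and $E_8$, which occur only when $\frg$ is of those types; here I intend to use the code--lattice correspondence of Section \ref{se:codes_lattices}: an orthogonal basis of long roots presents the root lattice as $\Gamma_{\mb C}$ for a binary linear code $\mb C$, which is forced to be equivalent to the simplex, respectively extended Hamming, code, these being the unique codes producing $E_7$ and $E_8$ up to coordinate permutation; such a permutation is realized by a lattice automorphism, which lies in $W(E_7)$, respectively $W(E_8)$, because the automorphism group of each of these lattices coincides with its Weyl group.

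I expect the genuine difficulty to be concentrated in the $E_7$ and $E_8$ conjugacy. The existence part is finite bookkeeping once Proposition \ref{pr:code_lattices} is in hand, and the classical and $F_4$ transitivity is immediate from the matching picture; but transitivity of $W(E_7)$ and $W(E_8)$ on orthogonal frames relies on the uniqueness of the underlying codes together with the identification of the lattice automorphism group with the Weyl group. Should this route prove awkward, one can instead invoke directly the known transitivity of these Weyl groups on maximal systems of mutually orthogonal roots.
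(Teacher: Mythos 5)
Your proposal is correct, and it splits cleanly into a half that mirrors the paper and a half that genuinely departs from it. The existence argument is the same as the paper's: reduce via Theorem \ref{th:SL2n_structures} to the existence of $\rank(\frg)$ pairwise orthogonal long roots, apply the equivalence (ii)$\Leftrightarrow$(vi) of Proposition \ref{pr:code_lattices} to the long-root subsystem $R_\ell$, and run through the classification ($R_\ell=D_n$ for $B_n$, $nA_1$ for $C_n$, $D_4$ for $F_4$, $A_2$ for $G_2$); your componentwise handling of reducible $R_\ell$ is in fact slightly more careful than the paper, which invokes the proposition (stated for irreducible lattices) without comment. Where you diverge is the conjugacy statement. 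The paper settles it in one line by citing Bourbaki (Chapter VI, \S 1, Exercise 14): any two maximal sets of pairwise orthogonal long roots of an irreducible root system are conjugate under the Weyl group; combined with conjugacy of Cartan subalgebras and the lifting of Weyl group elements to automorphisms of $\frg$ (Chapter VIII, \S 5.2) --- a lifting you also need and implicitly use --- this covers all types uniformly. You instead prove transitivity by hand, component by component of $R_\ell$: trivially for $A_1$; via perfect matchings and $S_{2m}\subseteq W(D_{2m})$ for $D_{2m}$ (this is sound: two orthogonal roots of $D_{2m}$ share either both or neither of their coordinates, which forces a full orthogonal frame to consist of both roots $e_i\pm e_j$ for each pair of a perfect matching); and for $E_7$, $E_8$ via the code--lattice dictionary, using that a frame presents $\Gamma$ as $\Gamma_{\mb{C}}$, that the simplex and extended Hamming codes are the unique codes (up to coordinate permutation) whose lattices are $E_7$, resp.\ $E_8$, and that $\mathrm{Aut}(\Gamma)=W(\Gamma)$ for these two lattices (true, since their Dynkin diagrams admit no symmetries). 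This route is thematically attractive for this paper, as it reuses the machinery of Section \ref{se:codes_lattices}, but be aware of what it costs: the two classical inputs for $E_7$/$E_8$ (uniqueness of the codes and the identification of the lattice automorphism group with the Weyl group) are asserted rather than proved, so your argument is not more self-contained than the paper's, only differently sourced; indeed, the fallback you mention at the end --- invoking known Weyl-group transitivity on maximal orthogonal systems of roots --- is exactly the paper's proof.
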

\begin{proof}
Any two maximal subsets of orthogonal long roots of an irreducible root system are conjugate under the Weyl group \cite[Chapter VI, \S 1, Exercise 14]{Bou4_6}. As any two Cartan subalgebras of $\frg$ are conjugate and any element of the Weyl group relative to a Cartan subalgebra lifts to an automorphism of $\frg$ \cite[Chapter VIII, \S 5.2]{Bou7_9}, the conjugacy result follows.

Now, $\frg$ admits an optimal short $\mathsf{SL}_2^n$-structure if and only if the root system consisting of the long roots of $\frg$ relative to a Cartan subalgebra is of type $nA_1$, $D_{2n}$ ($n\geq 2$), $E_7$ or $E_8$ by Proposition \ref{pr:code_lattices}. But the system of long roots of $B_n$ ($n\geq 3$) is $D_n$, that of $C_n$ is $nA_1$, that of $G_2$ is $A_2$, and that of $F_4$ is $D_4$. Thus, due to Proposition \ref{pr:code_lattices}, $\frg$ admits an optimal $\mathsf{SL}_2^n$-structure if and only if $\frg$ is of type $A_1$, $B_{2n}$ ($n\geq 2$), $C_n$ ($n\geq 2$), $D_{2n}$ ($n\geq 2$), $E_7$, $E_8$, or $F_4$.
\end{proof}

\medskip

\subsection{Coordinate algebra}\label{ss:coordinate} \null\quad

Given an optimal short $\mathsf{SL}_2^n$-structure in a simple Lie algebra $\frg$, we get the isotypic decomposition \eqref{eq:isotypic} with $\frc=0$ (as $n=\rank(\frg)$, and hence $\frc$ centralizes the Cartan subalgebra $\FF h_1\oplus\cdots \oplus\FF h_n$, which is its own centralizer), and with $\dim \cA^\mb{c}\leq 1$ for any $\mb{c}\in\FF_2^n\setminus\{\mb{0}\}$, because the multiplicity of any root is $1$. The isotypic decomposition \eqref{eq:isotypic} may be then rewritten as:
\begin{equation}\label{eq:isotypic2}
\frg=\Bigl(\bigoplus_{i=1}^n(\frsl(V_i)\otimes\FF t_i)\Bigr)\oplus\Bigl(\bigoplus_{\mb{c}\in \mb{S}}(V^{\mb{c}}\otimes \FF e^{\mb{c}})\Bigr)
\end{equation}
for some subset $\mb{S}\subseteq \FF_2^n\setminus\{\mb{0}\}$.

The next result is a straightforward consequence of Clebsch-Gordan formula \cite[Chapter VIII, \S 9.4]{Bou7_9}:

\begin{lemma}\label{le:sl_invariants}
Let $V$ be a $2$-dimensional vector space endowed with a nonzero skew-symmetric bilinear form $\langle.\mid.\rangle$. Then:
\begin{itemize}
\item $\Hom_{\mathsf{SL}(V)}(V\otimes V,\FF)$ is spanned by $\langle .\mid.\rangle$.

\item $\Hom_{\mathsf{SL}(V)}(\frsl(V)\otimes V,V)$ is spanned by the natural action of $\frsl(V)$ on $V$.

\item $\Hom_{\mathsf{SL}(V)}(V\otimes V,\frsl(V))$ is spanned by the map $u\otimes v\mapsto \Bigl(s_{u,v}:w\mapsto \frac{1}{2}\bigl(\langle w\mid u\rangle v+\langle w\mid v\rangle u\bigr)\Bigr)$. (This is $-\frac{1}{2}\gamma_{u,v}$, for $\gamma_{u,v}$ in \cite[(2.1)]{Eld07}.)

\item $\Hom_{\mathsf{SL}(V)}(\frsl(V)\otimes\frsl(V),V)=0=\Hom_{\mathsf{SL}(V)}(V\otimes V,V)$.
\end{itemize}
\end{lemma}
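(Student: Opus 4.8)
The statement to prove is Lemma~\ref{le:sl_invariants}, which computes four specific spaces of $\mathsf{SL}(V)$-invariant linear maps. The guiding principle is that, over an algebraically closed field of characteristic $0$, computing a space $\Hom_{\mathsf{SL}(V)}(M,N)$ of equivariant maps amounts to decomposing $M\otimes N^*$ (or equivalently $\Hom(M,N)$) into irreducibles via the Clebsch--Gordan formula and counting how many times the trivial module appears; equivalently, by Schur's lemma, $\dim\Hom_{\mathsf{SL}(V)}(M,N)$ equals the number of common irreducible constituents of $M$ and $N$ counted with multiplicity. So the plan is to write every module in sight as a sum of the irreducible $\mathsf{SL}_2$-modules $V(k)$ (the $(k{+}1)$-dimensional one, of highest weight $k$), apply Clebsch--Gordan, and read off dimensions; the bulk of the work is then to exhibit an explicit nonzero invariant map in each nonzero case to confirm it is the claimed generator.

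First I would fix notation: $V=V(1)$ is the $2$-dimensional natural module, $\frsl(V)\cong V(2)$ is the adjoint (its $3$-dimensionality matching highest weight $2$), and the skew form $\langle.\mid.\rangle$ identifies $V$ with $V^*$, so all modules are self-dual and $\Hom(M,N)\cong M\otimes N$ as modules. The key computations are the three Clebsch--Gordan products $V\otimes V\cong V(2)\oplus V(0)$, $\frsl(V)\otimes V\cong V(3)\oplus V(1)$, and $\frsl(V)\otimes\frsl(V)\cong V(4)\oplus V(2)\oplus V(0)$. From these the four assertions follow by counting the multiplicity of the relevant target: for the first item, $\Hom_{\mathsf{SL}(V)}(V\otimes V,\FF)=\Hom_{\mathsf{SL}(V)}(V\otimes V,V(0))$ is $1$-dimensional since $V(0)$ appears once in $V\otimes V$; for the second, $V(1)$ appears once in $\frsl(V)\otimes V$, giving a $1$-dimensional space; for the third, $V(2)=\frsl(V)$ appears once in $V\otimes V$, again $1$-dimensional; and for the last item both spaces vanish because $V=V(1)$ (an odd weight, i.e.\ the nontrivial constituent of $V$) does not occur in $\frsl(V)\otimes\frsl(V)$ nor in $V\otimes V$, both of which contain only even-weight constituents.

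Having established that each nonzero space is exactly $1$-dimensional, it remains to check that the explicitly named maps are nonzero and hence span. For the first item, $\langle.\mid.\rangle$ is a nonzero skew form, manifestly $\mathsf{SL}(V)$-invariant. For the second, the natural action $\frsl(V)\otimes V\to V$ is nonzero and equivariant essentially by definition of a module map. For the third, I would verify that $s_{u,v}$ indeed lands in $\frsl(V)$ by checking that each $s_{u,v}$ is trace-free as an endomorphism of $V$ (a short computation using a symplectic basis $\{u_0,v_0\}$ with $\langle u_0\mid v_0\rangle=1$), and that the map $u\otimes v\mapsto s_{u,v}$ is nonzero and $\mathsf{SL}(V)$-equivariant; equivariance is immediate because the formula is built only from the invariant form $\langle.\mid.\rangle$ and the module structure. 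The only genuine subtlety, and the main thing to be careful about, is to confirm the normalization claim that $s_{u,v}=-\tfrac12\gamma_{u,v}$ matches the convention of \cite[(2.1)]{Eld07}; this is a bookkeeping comparison of two explicit formulas rather than a conceptual obstacle. In short, the real content is the three Clebsch--Gordan decompositions together with the parity observation that odd-weight modules cannot sit inside a tensor square of even-weight modules, and everything else is verifying that the stated generators are nonzero.
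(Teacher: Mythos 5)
Your proposal is correct and follows exactly the route the paper intends: the paper derives this lemma as a ``straightforward consequence of the Clebsch--Gordan formula,'' which is precisely your multiplicity count $V\otimes V\cong V(2)\oplus V(0)$, $\frsl(V)\otimes V\cong V(3)\oplus V(1)$, $\frsl(V)\otimes\frsl(V)\cong V(4)\oplus V(2)\oplus V(0)$ combined with Schur's lemma and the self-duality of $\mathsf{SL}_2$-modules. Your additional checks that the named maps are nonzero (e.g.\ that $s_{u,v}$ is trace-free by skew-symmetry) simply make explicit what the paper leaves to the reader.
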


Equip the $V_i$'s above with a fixed nonzero skew-symmetric bilinear map $\langle .\mid.\rangle$ (the same notation is used for all $i$). Lemma \ref{le:sl_invariants} tells us that given $\mb{c}\neq\mb{d}\in\FF_2^n\setminus\{\mb{0}\}$, there is a unique, up to scalars, nonzero linear map 
\[
\varphi_{\mb{c},\mb{d}}:V^{\mb{c}}\times V^{\mb{d}}\longrightarrow V^{\mb{c}+\mb{d}}
\]
invariant under the action of $\mathsf{SL}_2^n=\mathsf{SL}(V_1)\times\cdots\times \mathsf{SL}(V_n)$, and this is given by contraction on the `common indices'. Thus, for instance,
\[
\varphi_{(1,1,1,0),(1,0,1,1)}\bigl(u_1\otimes u_2\otimes u_3,v_1\otimes v_3\otimes v_4\bigr)=
\langle u_1\mid v_1\rangle\langle u_3\mid v_3\rangle u_2\otimes v_4,
\]
for $u_i,v_i\in V_i$, $1\leq i\leq 4$.

Similarly, for $\mb{c}\in\FF_2^n\setminus\{\mb{0}\}$ with $1$ in the $i^{\text{th}}$ position, there is a unique, up to scalars, nonzero linear map
\[
\varphi_{\mb{c},\mb{c}}^i:V^{\mb{c}}\times V^{\mb{c}}\longrightarrow \frsl(V^i),
\]
given by contraction on the indices different from $i$ and using $s_{u,v}$ in Lemma \ref{le:sl_invariants}. Thus, for instance,
\[
\varphi_{(1,1,1,0),(1,1,1,0)}^2\bigl(u_1\otimes u_2\otimes u_3,v_1\otimes v_2\otimes v_3)=
\langle u_1\mid v_1\rangle\langle u_3\mid v_3\rangle s_{u_2,v_2}.
\]

Consider the vector space
\[
\cC=\FF t_1\oplus\cdots \oplus\FF t_n\oplus\Bigl(\bigoplus_{\mb{c}\in\mb{S}}\FF e^{\mb{c}}\Bigr).
\] 
The invariance of the Lie bracket of $\frg$ under the action of $\mathsf{SL}_2^n=\mathsf{SL}(V_1)\times\cdots\times \mathsf{SL}(V_n)$ induces a bilinear multiplication on $\cC$ with:
\[
\begin{split}
& t_i^2=t_i,\ i=1,\ldots,n;\quad t_it_j=0\ \text{for $i\neq j$ (that is,
$t_it_j=\delta_{ij}$ for all $i,j$),}\\[6pt]
& t_ie^{\mb{c}}=e^{\mb{c}}t_i=\begin{cases} e^{\mb{c}}&\text{if $i\in\supp(\mb{c})$,}\\
     0&\text{otherwise,}\end{cases}\\[6pt]
& e^{\mb{c}}e^{\mb{d}}\in \FF e^{\mb{c}+\mb{d}}\quad\text{for $\mb{c}\neq \mb{d}$ in $\mb{S}$,}\\[6pt]
& e^{\mb{c}}e^{\mb{c}}\in\sum_{i\in\supp(\mb{c})}\FF t_i,\quad\text{for $\mb{c}\in\mb{S}$,}
\end{split}
\]
such that, if we write $e^{\mb{c}}e^{\mb{c}}=\sum_{i\in\supp(\mb{c})}\mu_i t_i$, $\mu_i\in \FF$ for all $i$, the Lie bracket on $\frg$ is given by:
\[
\begin{split}
&[x\otimes t_i,y\otimes t_i]=[x,y]\otimes t_i,\quad\text{for $i=1,\ldots,n$, $x,y\in\frsl(V_i)$,}
\\[6pt]
&[x\otimes t_i,y\otimes t_j]=0,\quad\text{for $1\leq i\neq j\leq n$, $x\in\frsl(V_i)$, $y\in\frsl(V_j)$,}
\\[6pt]
&[x\otimes t_i,(u_{i_1}\otimes\cdots\otimes u_{i_r})\otimes e^{\mb{c}}]=\\
&\qquad\qquad
  \begin{cases} 0&\text{if $i\not\in\supp(\mb{c})$,}\\
  (u_{i_1}\otimes\cdots\otimes (xu_{i_j})\otimes\cdots\otimes u_{i_r})\otimes e^{\mb{c}}&\text{if $i=i_j\in\supp(\mb{c})$,}
  \end{cases}
  \\
  &\qquad\qquad\text{for $\mb{c}\in\mb{S}$ with $\supp(\mb{c})=(i_1,\ldots,i_r)$, $u_{i_s}\in V_{i_s}$, $s=1,\ldots,r$, $x\in\frsl(V_i)$,}
  \\[6pt]
&[X\otimes e^{\mb{c}},Y\otimes e^{\mb{d}}]=\varphi_{\mb{c},\mb{d}}(X,Y)\otimes e^{\mb{c}}e^{\mb{d}},\quad
 \text{for $\mb{c}\neq\mb{d}$ in $\mb{S}$, $X\in V^{\mb{c}}$, $Y\in V^{\mb{d}}$,}
\\[6pt]
&[X\otimes e^{\mb{c}},Y\otimes e^{\mb{c}}]=\sum_{i\in\supp(\mb{c})}\mu_i\varphi_{\mb{c},\mb{c}}^i(X,Y)\otimes t_i,\quad\text{for $\mb{c}\in\mb{S}$, $X,Y\in V^{\mb{c}}$.}
\end{split}
\]

In other words, the Lie algebra $\frg$ is completely determined by its \emph{coordinate algebra} $\cC$.

\begin{remark}\label{re:root_system}
Given a standard basis $\{e_i,f_i,h_i\}$ of $\frsl(V_i)$, the subspace 
\[
\frh=\bigoplus_{i=1}^n\FF(h_i\otimes t_i)
\] 
is a Cartan subalgebra of $\frg$ in \eqref{eq:isotypic2}. Let, as before, $\alpha_i\in\frh^*$ be defined by $\alpha_i(h_i\otimes t_j)=2\delta_{ij}$. Then the root system of $\frg$ relative to $\frh$ is
\[
\Phi=\{\pm \alpha_i: i=1,\ldots,n\}\cup\Bigl(\bigcup_{\mb{c}\in\mb{S}}\Bigl\{\frac{1}{2}\sum_{i\in\supp(\mb{c})} (\pm\alpha_i)\Bigr\}\Bigr).
\]
\end{remark}

\bigskip

\section{Short $\mathsf{SL}_2^7$-structure on $E_7$ and the simplex $[7,4,3]$ binary linear code}\label{se:e7}

The simplex code (Example \ref{ex:simplex}) is the binary linear code $\mb{C}\subseteq\FF_2^7$ with 
\[
\mb{C}=\{\mb{0},\mb{c}_1,\mb{c}_2,\mb{c}_3,\mb{c}_4,\mb{c}_5,\mb{c}_6,\mb{c}_7\},
\]
where
\[
\begin{split}
\mb{c}_1&=(1,1,0,0,1,1,0),\\
\mb{c}_2&=(0,1,1,0,0,1,1),\\
\mb{c}_3&=(1,0,1,0,1,0,1)=\mb{c}_1+\mb{c}_2,\\
\mb{c}_4&=(1,1,1,1,0,0,0),\\
\mb{c}_5&=(0,0,1,1,1,1,0)=\mb{c}_1+\mb{c}_4,\\
\mb{c}_6&=(1,0,0,1,0,1,1)=\mb{c}_2+\mb{c}_4,\\
\mb{c}_6&=(0,1,0,1,1,0,1)=\mb{c}_3+\mb{c}_4.
\end{split}
\]
The description of the simple Lie algebra of type $E_7$ in \cite[\S 3.2]{Eld07} can be expressed as follows:
\[
\fre_7=\Bigl(\bigoplus_{i=1}^7\frsl(V_i)\otimes \FF t_i\Bigr)
 \oplus\Bigl(\bigoplus_{\mb{c}\in \mb{C}\setminus\{\mb{0}\}}(V^{\mb{c}}\otimes \FF e^{\mb{c}})\Bigr),
\]
and this gives the unique, up to conjugation by an automorphism, (optimal) short $\mathsf{SL}_2^7$-structure on $\fre_7$. 

The coordinate algebra 
\[
\cC=\Bigl(\bigoplus_{i=1}^7\FF t_i\Bigr)\oplus\Bigl(\bigoplus_{\mb{c}\in \mb{C}\setminus\{\mb{0}\}}
\FF e^{\mb{c}}\Bigr),
\]
that completely determines $\fre_7$ is given by the following equations, that follow from \cite[(3.7)]{Eld07}:
\begin{equation}\label{eq:e7}
\begin{split}
&t_it_j=\delta_{ij}t_i,\quad\text{for $1\leq i,j\leq 7$,}\\[6pt]
&t_ie^{\mb{c}}=e^{\mb{c}}t_i=\begin{cases} e^{\mb{c}}&\text{if $i\in\supp(\mb{c})$,}\\
    0&\text{otherwise,}\end{cases}\\[6pt]
&e^{\mb{c}}e^{\mb{d}}=\epsilon(\mb{c},\mb{d})e^{\mb{c}+\mb{d}},\quad\text{for $\mb{c}\neq\mb{d}\in\mb{C}\setminus\{\mb{0}\}$,}\\[6pt]
&e^{\mb{c}}e^{\mb{c}}=\epsilon(\mb{c},\mb{c})\sum_{i\in\supp(\mb{c})}t_i,\quad\text{for $\mb{c}\in\mb{C}\setminus\{\mb{0}\}$,}
\end{split}
\end{equation}
where $\epsilon(\mb{c},\mb{d})\in\{\pm 1\}$ is the sign that appears in the multiplication table of the real division algebra $\OO$ of the octonions in the basis $\{1,i,j,k,l,il,jl,kl\}$ (see Table \ref{ta:octonions}), under the assignment
\begin{align*}
&&\mb{c_1}&\leftrightarrow i, &\mb{c_2}&\leftrightarrow j,&\mb{c_3}&\leftrightarrow k, \\
\mb{c_4}&\leftrightarrow l,&\mb{c_5}&\leftrightarrow il, &\mb{c_6}&\leftrightarrow jl,&\mb{c_7}&\leftrightarrow kl. &&
\end{align*}
Thus, for instance, $\epsilon(\mb{c}_5,\mb{c}_6)=-1$, because $(il)(jl)=-k$ in $\OO$, so that $e^{\mb{c}_5}e^{\mb{c}_6}=-e^{\mb{c}_5+\mb{c}_6}=-e^{\mb{c}_3}$.

In particular, $\epsilon(\mb{c},\mb{c})=-1$ for any $\mb{c}\in\mb{C}\setminus\{\mb{0}\}$.

\begin{table}[ht!]
{\small
$$
\vbox{\offinterlineskip \halign{\hfil$#$\enspace\hfil\vreglon
 &\hfil\enspace$#$\enspace\hfil\vregla
 &\hfil\enspace$#$\enspace\hfil\vregla
 &\hfil\enspace$#$\enspace\hfil\vregla
 &\hfil\enspace$#$\enspace\hfil&\hskip .2pt\vregla#
 &&\hfil\enspace$#$\enspace\hfil\vregla\cr
 \omit\hfil\vrule width1pt depth 4pt
   &1&i&j&k&\omit\vregleta
     &l&il&jl&kl\cr
 \noalign{\hreglon}
 1&1&i&j&k&\omit\vregleta&l&il&jl&kl\cr
 \noalign{\hregla}
 i&i&-1&k&-j&\omit\vregleta&il&-l&-kl&jl\cr\noalign{\hregla}
 j&j&-k&-1&i&\omit\vregleta&jl&kl&-l&-il\cr\noalign{\hregla}
 k&k&j&-i&-1&\omit\vregleta&kl&-jl&il&-l\cr
 \multispan6{\hregletafill}&\multispan4{\hreglafill}\cr
 l&l&-il&-jl&-kl&&-1&i&j&k\cr\noalign{\hregla}
 il&il&l&-kl&jl&&-i&-1&-k&j\cr\noalign{\hregla}
 jl&jl&kl&l&-il&&-j&k&-1&-i\cr\noalign{\hregla}
 kl&kl&-jl&il&l&&-k&-j&i&-1\cr
 \noalign{\hregla}
}}
$$}\caption{The Octonions}\label{ta:octonions}
\end{table}

\bigskip

\section{Short $\mathsf{SL}_2^8$-structure on $E_8$ and the extended Hamming $[8,4,4]$ binary linear code}\label{se:e8}

The extended Hamming code (Example \ref{ex:extended_Hamming}) is the binary linear code $\mb{H}$ consisting of $\mb{0}$, $\mb{1}$, and the elements
\begin{align*}
\mb{c}_1&=(1,1,0,0,1,1,0,0), & \mb{c}_8&=(0,0,1,1,0,0,1,1)=\mb{1}+\mb{c}_1,\\
\mb{c}_2&=(0,1,1,0,0,1,1,0), & \mb{c}_9&=(1,0,0,1,1,0,0,1)=\mb{1}+\mb{c}_2,\\
\mb{c}_3&=(1,0,1,0,1,0,1,0)=\mb{c}_1+\mb{c}_2, &\mb{c}_{10}&=(0,1,0,1,0,1,0,1)=\mb{1}+\mb{c}_3,\\
\mb{c}_4&=(1,1,1,1,0,0,0,0), & \mb{c}_{11}&=(0,0,0,0,1,1,1,1)=\mb{1}+\mb{c}_4,\\
\mb{c}_5&=(0,0,1,1,1,1,0,0)=\mb{c}_1+\mb{c}_4, & \mb{c}_{12}&=(1,1,0,0,0,0,1,1)=\mb{1}+\mb{c}_5,\\
\mb{c}_6&=(1,0,0,1,0,1,1,0)=\mb{c}_2+\mb{c}_4, & \mb{c}_{13}&=(0,1,1,0,1,0,0,1)=\mb{1}+\mb{c}_6,\\
\mb{c}_7&=(0,1,0,1,1,0,1,0)=\mb{c}_3+\mb{c}_4, & \mb{c}_{14}&=(1,0,1,0,0,1,0,1)=\mb{1}+\mb{c}_7.
\end{align*}

The description of the simple Lie algebra of type $E_8$ in \cite[\S 3.3]{Eld07} can be expressed as follows:
\[
\fre_8=\Bigl(\bigoplus_{i=1}^8\frsl(V_i)\otimes \FF t_i\Bigr)
 \oplus\Bigl(\bigoplus_{\mb{c}\in \mb{H}\setminus\{\mb{0},\mb{1}\}}(V^{\mb{c}}\otimes \FF e^{\mb{c}})\Bigr),
\]
and this gives the unique, up to conjugation by an automorphism, (optimal) short $\mathsf{SL}_2^8$-structure on $\fre_8$. 

The coordinate algebra 
\[
\cH=\Bigl(\bigoplus_{i=1}^8\FF t_i\Bigr)\oplus\Bigl(\bigoplus_{\mb{c}\in \mb{H}\setminus\{\mb{0},\mb{1}\}}
\FF e^{\mb{c}}\Bigr),
\]
is determined by the following equations obtained from \cite[(3.16)]{Eld07}:
\begin{equation}\label{eq:e8}
\begin{split}
&t_it_j=\delta_{ij}t_i,\quad\text{for $1\leq i,j\leq 8$,}\\[6pt]
&t_ie^{\mb{c}}=e^{\mb{c}}t_i=\begin{cases} e^{\mb{c}}&\text{if $i\in\supp(\mb{c})$,}\\
    0&\text{otherwise,}\end{cases}\\[6pt]
&e^{\mb{c}}e^{\mb{d}}=\epsilon(\mb{c},\mb{d})e^{\mb{c}+\mb{d}},\quad\text{for $\mb{c}\neq\mb{d}\in\mb{H}\setminus\{\mb{0}\}$,}\\[6pt]
&e^{\mb{c}}e^{\mb{c}}=\epsilon(\mb{c},\mb{c})\sum_{i\in\supp(\mb{c})}t_i,\quad\text{for $\mb{c}\in\mb{H}\setminus\{\mb{0},\mb{1}\}$,}
\end{split}
\end{equation}
where $\epsilon(\mb{c},\mb{d})\in\{\pm 1\}$. As shown in \cite[\S 3.3]{Eld07}, the signs that appear here are the signs in the multiplication table of $\OO\otimes_\RR \RR[\varepsilon]$ (isomorphic to $\OO\oplus\OO$), where $\RR[\varepsilon]$ is the real algebra $\RR 1\oplus\RR\varepsilon$ with $\varepsilon^2=1$, in the basis
\begin{multline*}
\{1\otimes 1,i\otimes 1,j\otimes 1,k\otimes 1,l\otimes 1, (il)\otimes 1,(jl)\otimes 1,(kl)\otimes 1,\\
1\otimes \varepsilon,
i\otimes \varepsilon,
j\otimes \varepsilon,
k\otimes \varepsilon,
-l\otimes \varepsilon,
-(il)\otimes \varepsilon,
-(jl)\otimes \varepsilon,
-(kl)\otimes \varepsilon\}
\end{multline*}
under the assignment
\begin{align*}
&&\mb{c_1}&\leftrightarrow i\otimes 1, 
&\mb{c_2}&\leftrightarrow j\otimes 1,
&\mb{c_3}&\leftrightarrow k\otimes 1, 
\\
\mb{c_4}&\leftrightarrow l\otimes 1,
&\mb{c_5}&\leftrightarrow (il)\otimes 1, 
&\mb{c_6}&\leftrightarrow (jl)\otimes 1,
&\mb{c_7}&\leftrightarrow (kl)\otimes 1, 
\\
&&\mb{c_8}&\leftrightarrow i\otimes \varepsilon, 
&\mb{c_9}&\leftrightarrow j\otimes \varepsilon,
&\mb{c_{10}}&\leftrightarrow k\otimes \varepsilon, 
\\
\mb{c_{11}}&\leftrightarrow -l\otimes \varepsilon,
&\mb{c_{12}}&\leftrightarrow -(il)\otimes \varepsilon, 
&\mb{c_{13}}&\leftrightarrow -(jl)\otimes \varepsilon,
&\mb{c_{14}}&\leftrightarrow -(kl)\otimes \varepsilon.
\end{align*}

\bigskip

\section{Short $\mathsf{SL}_2^4$-structure on $F_4$}\label{se:f4} 

For $F_4$, the results in \cite[\S 3.1]{Eld07} show that we need to consider the code $\mb{F}\subseteq\FF_2^4$ consisting of the $4$-tuples with an even number of $1$'s, so that a check matrix is the one-row matrix $\begin{pmatrix} 1&1&1&1\end{pmatrix}$. The codewords are $\mb{0}$, $\mb{1}$, and
\begin{align*}
\mb{c}_1&=(1,1,0,0),\quad&\mb{c}_4&=(0,0,1,1)=\mb{1}+\mb{c_1},\\
\mb{c}_2&=(0,1,1,0),\quad&\mb{c}_5&=(1,0,0,1)=\mb{1}+\mb{c_2},\\
\mb{c}_3&=(1,0,1,0),\quad&\mb{c}_6&=(0,1,0,1)=\mb{1}+\mb{c_3}.
\end{align*}
Then we have
\[
\frf_4=\Bigl(\bigoplus_{i=1}^4\frsl(V_i)\otimes \FF t_i\Bigr)
 \oplus\Bigl(\bigoplus_{\mb{c}\in \mb{F}\setminus\{\mb{0}\}}(V^{\mb{c}}\otimes \FF e^{\mb{c}})\Bigr),
\]
and the coordinate algebra is the ``code-like'' algebra
\[
\cF=\Bigl(\bigoplus_{i=1}^4\FF t_i\Bigr)\oplus\Bigl(\bigoplus_{\mb{c}\in\mb{F}\setminus\{\mb{0}\}}\FF e^{\mb{c}}\Bigr),
\]
with 
\begin{equation}\label{eq:f4}
\begin{split}
&t_it_j=\delta_{ij}t_i,\quad\text{for $1\leq i,j\leq 4$,}\\[6pt]
&t_ie^{\mb{c}}=e^{\mb{c}}t_i=\begin{cases} e^{\mb{c}}&\text{if $i\in\supp(\mb{c})$,}\\
    0&\text{otherwise,}\end{cases}\\[6pt]
&e^{\mb{c}}e^{\mb{d}}=\epsilon(\mb{c},\mb{d})e^{\mb{c}+\mb{d}},\quad\text{for $\mb{c}\neq\mb{d}\in\mb{F}\setminus\{\mb{0}\}$,}\\[6pt]
&e^{\mb{c}}e^{\mb{c}}=\epsilon(\mb{c},\mb{c})\sum_{i\in\supp(\mb{c})}t_i,\quad\text{for $\mb{c}\in\mb{F}\setminus\{\mb{0},\mb{1}\}$,}
\end{split}
\end{equation}
where the scalars $\varepsilon(\mb{c},\mb{d})$ are given in Table \ref{ta:f4}.

\begin{table}[ht!]
$$
\vbox{\offinterlineskip \halign{\hfil$#$\hfil\enspace\vreglon
 &\enspace\hfil$#$\hfil\enspace\vregla
 &\enspace\hfil$#$\hfil\enspace\vregla
 &\enspace\hfil$#$\hfil\enspace\vregla
 &\enspace\hfil$#$\hfil\enspace&\hskip .2pt\vregla#
 &&\enspace\hfil$#$\hfil\enspace\vregla\cr
 \omit\enspace\hfil\raise2pt\hbox{$\ {}$}%
  \enspace\hfil\vrule width1pt depth 4pt
   &\enspace\mb{0}\enspace&\enspace\mb{c}_1\enspace&\enspace\mb{c}_2\enspace
   &\enspace\mb{c}_3\enspace&\omit\vregleta
     &\enspace\mb{1}\enspace
     &\enspace\mb{c}_4\enspace
     &\enspace\mb{c}_5\enspace&\enspace\mb{c}_6\enspace\cr
 \noalign{\hreglon}
 \mb{0}&1&1&1&1&\omit\vregleta&1&1&1&1\cr
 \noalign{\hregla}
 \mb{c}_1&1&-2&1&1&\omit\vregleta&1&-2&-1&-1\cr\noalign{\hregla}
 \mb{c}_2&1&1&-2&1&\omit\vregleta&1&-1&-2&-1\cr\noalign{\hregla}
 \mb{c}_3&1&1&1&-2&\omit\vregleta&1&-1&-1&-2\cr
 \multispan6{\hregletafill}&\multispan4{\hreglafill}\cr
 \mb{1}&1&-1&-1&-1&&-1&1&1&1\cr\noalign{\hregla}
 \mb{c}_4&1&2&-1&-1&&-1&-2&-1&-1\cr\noalign{\hregla}
 \mb{c}_5&1&-1&2&-1&&-1&-1&-2&-1\cr\noalign{\hregla}
 \mb{c}_6&1&-1&-1&2&&-1&-1&-1&-2\cr
 \noalign{\hregla}
}}
$$
\caption{\null}\label{ta:f4}
\end{table}

Note that, unlike code algebras, the codeword $\mb{1}$ is included in the definition of $\cF$.

\bigskip

\section{Optimal short $\mathsf{SL}_2^n$-structures on classical Lie algebras}\label{se:classical}

As a final remark, the coordinate algebras for the optimal short $\mathsf{SL}_2^n$-structures on the other simple Lie algebras in Corollary \ref{co:optimal} are easy to deduce. We indicate here how to proceed and leave the details to the reader.

For $A_1$ the situation is trivial. The simple Lie algebra of type $C_n$ is the symplectic Lie algebra on an orthogonal sum of $2$-dimensional vector spaces: $\frsp(V_1\perp\cdots\perp V_n)$, with $V_1,\ldots,V_n$ $2$-dimensional vector spaces endowed with a nonzero skew symmetric bilinear form $\langle .\mid.\rangle$. We get the decomposition (note that $\frsl(V_i)=\frsp(V_i)$):
\[
\frc_n=\frsp(V_1\perp\cdots\perp V_n)=\Bigl(\bigoplus_{i=1}^n\frsl(V_i)\Bigr)\oplus\Bigl(\bigoplus_{1\leq i<j\leq n}(V_i\otimes V_j)\Bigr),
\]
where $u\otimes v\in V_i\otimes V_j$ is identified with the linear map
\[
w\mapsto\begin{cases} \langle w\mid u\rangle v&\text{if $w\in V_i$,}\\
  \langle w\mid v\rangle u&\text{if $w\in V_j$,}\\
  0&\text{otherwise.}
  \end{cases}
\]
This gives the (optimal) short $\mathsf{SL}_2^n$-structure on $\frc_n$.

\smallskip

A bit more involved is the case $D_{2n}$. Let $V_1,\ldots,V_{2n}$ be as above, then for each $i=1,\ldots,n$, the $4$-dimensional vector space $V_{2i-1}\otimes V_{2i}$ is endowed with the nondegenerate symmetric bilinear form given by
\[
\bigl(u\otimes v\mid u'\otimes v'\bigr)=\langle u\mid u'\rangle\langle v\mid v'\rangle,
\]
and we may identify the simple Lie algebra of type $D_{2n}$ with the orthogonal Lie algebra $\frso\bigl(V_1\otimes V_2\perp \cdots\perp V_{2n-1}\otimes V_{2n}\bigr)$.

Given a vector space $W$ endowed with a nondegenerate symmetric bilinear form $b(.,.)$, the orthogonal Lie algebra $\frso(W)$ is spanned by the linear maps:
\[
\sigma_{w_1,w_2}:w\mapsto b(w,w_1)w_2-b(w,w_2)w_1.
\]
Hence, if $W$ is the orthogonal direct sum, relative to $b$, $W=W_1\perp W_2$, then $\frso(W)$ decomposes as $\frso(W_1)\oplus\frso(W_2)\oplus \sigma_{W_1,W_2}$, and we may identify $\sigma_{W_1,W_2}$ with $W_1\otimes W_2$.

Now, $\frso(V_{2i-1}\otimes V_{2i})$ is naturally isomorphic to $\frsl(V_{2i-1})\oplus\frsl(V_{2i})$. It follows that we may describe $\frd_{2n}=\frso\bigl(V_1\otimes V_2\perp\cdots\perp V_{2n-1}\otimes V_{2n}\bigr)$ as
\[
\frd_{2n}=\Bigl(\bigoplus_{i=1}^{2n}\frsl(V_i)\Bigr)\oplus
\Bigl(\bigoplus_{1\leq i<j\leq n}(V_{2i-1}\otimes V_{2i}\otimes V_{2j-1}\otimes V_{2j})\Bigr),
\]
thus obtaining the (optimal) short $\mathsf{SL}_2^{2n}$-structure on $\frd_{2n}$.

\smallskip

In the same vein, $B_{2n}$ may be identified with $\frso\bigl(\FF\perp V_1\otimes V_2\perp\cdots \perp V_{2n-1}\otimes V_{2n}\bigr)$, and from here one gets the decomposition
\[
\frb_{2n}=\Bigl(\bigoplus_{i=1}^{2n}\frsl(V_i)\Bigr)\oplus
\Bigl(\bigoplus_{1\leq i\leq n}(V_{2i-1}\otimes V_{2i})\Bigr)\oplus
\Bigl(\bigoplus_{1\leq i<j\leq n}(V_{2i-1}\otimes V_{2i}\otimes V_{2j-1}\otimes V_{2j})\Bigr),
\]
obtaining the (optimal) short $\mathsf{SL}_2^{2n}$-structure on $\frb_{2n}$.

The computation of the coordinate algebras in these cases is straightforward.

\bigskip

\section{Final thoughts}

As mentioned in Section \ref{se:Sstructures}, $\mathsf{S}$-structures generalize gradings by abelian groups on algebras, in the sense that a morphism $\mathsf{Q}\rightarrow \AAut(\cA)$, for a quasitorus $\mathsf{Q}$ and a nonassociative algebra $\cA$, is substituted by a morphism $\mathsf{S}\rightarrow \AAut(\cA)$, for an arbitrary reductive group $\mathsf{S}$. This is reflected in the title of \cite{Vinberg}: \emph{Non-abelian gradings of Lie algebras}. 

In a grading, the algebra $\cA$ splits into a direct sum of one-dimensional irreducible submodules for the corresponding quasitorus $\mathsf{Q}$, while in an $\mathsf{S}$-structure there are more options for these irreducible modules.

Fine gradings (see \cite[\S 1.3]{EKmon}) are gradings where the irreducible modules for $\mathsf{Q}$ appear with low multiplicity, that is, the homogeneous components are small. In this work, the optimal $\mathsf{SL}_2^n$-structures that have been considered satisfy that the multiplicities of the irreducible modules that appear are all equal to $1$, so they can be thought as a sort of `fine $\mathsf{S}$-structures'. The outcome is that some nice descriptions of several simple Lie algebras are obtained, showing an intriguing connection with code algebras.

The notion of $\mathsf{S}$-structure opens a broad area of research. It also extends the notion of Lie algebras graded by finite root systems, which may be seen as algebras with particular $\mathsf{S}$-structures.

The interesting $\mathsf{S}$-structures to be studied should be based on a nice group $\mathsf{S}$, with an isotypic decomposition of $\cA$ leading to a not too big, or not too complicated, coordinate algebra.

\bigskip


\end{document}